\numberwithin{equation}{section}
\newcommand{\inclu}[0] {\ar@{^{(}->}}
\newcommand{\dist}{{\rm dist}}
\newcommand{\R}{\mathbb{R}}
\newcommand{\EE}{\mathbb{E}}
\newcommand{\RR}{\mathbb{R}}
\newcommand{\cF}{\mathcal{F}}
\newcommand{\proj}{\mathrm{proj}}
\newcommand{\cX}{\mathcal{X}}
\newcommand{\prox}{{\rm prox}}
\newcommand{\dom}{\text{dom}\,}
\newcommand{\argmin}{\operatornamewithlimits{argmin}}
\newtheorem{thm}{Theorem}[section]
\newtheorem{lem}[thm]{Lemma}
\newtheorem{cor}[thm]{Corollary}
\theoremstyle{remark}
\DeclarePairedDelimiter{\dotp}{\langle}{\rangle}
\begin{document}
	
	\title{Stochastic subgradient method converges at the rate $O(k^{-1/4})$ on weakly convex functions}
	
	
	\author{Damek Davis\thanks{School of Operations Research and Information Engineering, Cornell University,
			Ithaca, NY 14850, USA;
			\texttt{people.orie.cornell.edu/dsd95/}.}
		\and 
		Dmitriy Drusvyatskiy\thanks{Department of Mathematics, U. Washington, 
			Seattle, WA 98195; \texttt{www.math.washington.edu/{\raise.17ex\hbox{$\scriptstyle\sim$}}ddrusv}. Research of Drusvyatskiy was supported by the AFOSR YIP award FA9550-15-1-0237 and by the NSF DMS   1651851 and CCF 1740551 awards.}
	}

	\date{}
	\maketitle
	\begin{abstract}
	We prove that the proximal stochastic subgradient method, applied to a weakly convex problem, drives the gradient of the Moreau envelope to zero at the rate $O(k^{-1/4})$. As a consequence, we resolve an open question on the convergence rate of the proximal stochastic gradient method for minimizing the sum of a smooth nonconvex function and a convex proximable function.
	\end{abstract}

	\section{Introduction}
	In this work, we consider the optimization problem 
	\begin{equation}\label{eqn:problem_class}
	\min~ \varphi(x) := g(x) + r(x)
	\end{equation}
	under the following  assumptions on the functional components $g$ and $r$.
	Throughout, $r \colon \RR^d \rightarrow \RR \cup \{\infty\}$ is a closed convex function with a computable proximal map
	\begin{equation*}
	\prox_{\alpha r}(x):=\argmin_{y}\, \left\{r(y)+\tfrac{1}{2\alpha}\|y-x\|^2\right\},
	\end{equation*}
	 while $g\colon\R^d\to\R$  is a
	$\rho$-weakly convex function, meaning that the assignment $x\mapsto g(x)+\frac{\rho}{2}\|x\|^2$ is convex. The above assumptions on $r$ are standard in the literature (see e.g. \cite{nest_conv_comp,beck,Parikh:prox}), while those on $g$ deserve some commentary.
	The class of weakly convex functions, first introduced in English in~\cite{Nurminskii1973}, is  broad. Indeed,  it includes all convex functions and smooth functions with Lipschitz continuous gradient. More generally,  any function of the form $g = h\circ c$, with $h$ convex and Lipschitz and $c$ a smooth map with Lipschitz Jacobian~\cite[Lemma 4.2]{comp_DP}, is weakly convex.  Classical literature highlights the importance of weak convexity in optimization \cite{fav_C2,amen,prox_reg}, while recent advances in statistical learning and signal processing have further reinvigorated the problem class \eqref{eqn:problem_class}. For a recent discussion on the role of weak convexity in large-scale optimization, see for example \cite{prox_point_surv}. 
	
	The proximal subgradient method is perhaps the simplest algorithm for the problem   \eqref{eqn:problem_class}. Given a current iterate $x_t$, the method repeats the steps
\begin{equation*}\left\{
\begin{aligned}
&\textrm{Choose } \zeta_t\in \partial g(x_t)\\
	& \textrm{Set } x_{t+1}= \prox_{\alpha_t r} (x_t- \alpha_t\zeta_t)
	\end{aligned}\right\},
	\end{equation*}
where $\alpha_t>0$ is an appropriately chosen control sequence. Here, the subdifferential $\partial g$ is meant in a standard  variational analytic sense \cite[Definition 8.3]{RW98}; we will recall the precise definition in Section~\ref{sec:main}. The setting when $r$ is the indicator function of a closed convex set $\mathcal{X}$ reduces the algorithm to the  classical {projected  subgradient method}. Indeed, then the proximal map $\prox_{\alpha_t r}(\cdot)$ is simply the nearest point projection $\proj_{\cX}(\cdot)$.

The primary goal in nonsmooth nonconvex optimization is the search for stationary points. 
A point $x\in\R^d$ is called {\em stationary} for the problem \eqref{eqn:problem_class} if the inclusion $0\in \partial \varphi(x)$ holds. In ``primal terms'', these are precisely the points where the directional derivative of $\varphi$ is nonnegative in every direction \cite[Proposition 8.32]{RW98}: 
\begin{equation}\label{eqn:subdif_direc_der}
\dist(0;\partial \varphi(x))=-\inf_{v:\, \|v\|\leq 1} \varphi'(x;v).
\end{equation}

It has been known since \cite{Nurminskii1974,Nurminskii1973} that the (stochastic) subgradient method with $r=0$ generates an iterate sequence that subsequentially converges to a stationary point of the problem. A long standing open question in this line of work is to determine the ``rate of convergence'' of the basic (stochastic) subgradient method and of its proximal extensions. 

An immediate difficulty in addressing this question is that it is not a priori clear how to measure the progress of the algorithm. Indeed, neither the functional suboptimality gap, $\varphi(x_t)-\min \varphi$, nor the stationarity measure, $\dist(0;\partial \varphi(x_t))$, necessarily tend to zero along the iterate sequence. Instead, recent literature \cite{prox_point_surv,comp_DP} has identified a different measure of complexity of minimizing weakly convex functions, based on smooth approximations. 
The key construction we use is the {\em Moreau envelope}:
$$\varphi_{\lambda}(x):=\min_{y}~ \left\{\varphi(y)+\tfrac{1}{2\lambda}\|y-x\|^2\right\},$$
 where $\lambda > 0$. Standard results  show that as long as $\lambda<\rho^{-1}$, the envelope $\varphi_{\lambda}$ is $C^1$-smooth with the gradient  given by 
 \begin{equation}\label{eqn:grad_form}
\nabla \varphi_{\lambda}(x)=\lambda^{-1}(x-\prox_{\lambda \varphi}(x)).
\end{equation}
See for example \cite[Theorem 31.5]{rock}.
Moreover, the norm of the gradient $\|\nabla \varphi_{\lambda}(x)\|$ has an intuitive interpretation in terms of near-stationarity for the target problem \eqref{eqn:problem_class}. Namely, the definition of the Moreau envelope directly implies that for any $x\in\R^d$, the proximal point $\hat x:=\prox_{\lambda \varphi}(x)$ satisfies
\begin{equation*}
	\left\{\begin{array}{cl}
		\|\hat{x}-x\|&=  \lambda\|\nabla \varphi_{\lambda}(x)\|,\\ 
		\varphi(\hat x) &\leq \varphi(x),\\
		\dist(0;\partial \varphi(\hat{x}))&\leq \|\nabla \varphi_{\lambda}(x)\|.
	\end{array}\right. 
\end{equation*}
Thus a small gradient $\|\nabla \varphi_{\lambda}(x)\|$ implies that $x$ is {\em near} some point $\hat x$ that is {\em nearly stationary} for \eqref{eqn:problem_class}.
For a longer discussion of near-stationarity, see   \cite{prox_point_surv} or \cite[Section 4.1]{comp_DP}.

In this paper, we show that under an appropriate choice of the control sequence $\alpha_t$, the subgradient method will generate a point $x$ satisfying $\|\nabla \varphi_{1/2\rho}(x)\|\leq \varepsilon$ after at most $O(\varepsilon^{-4})$ iterations. A similar guarantee was recently established for the proximally guided projected subgradient method \cite{prixm_guide_subgrad}. This scheme proceeds by directly applying the gradient descent method to the Moreau envelope $\varphi_{\lambda}$, with each proximal point $\prox_{\lambda \varphi}(x)$ approximately evaluated by a convex subgradient method. In contrast, we show here that the basic subgradient method, without any modification or parameter tuning, already satisfies the desired convergence guarantees. This is perhaps surprising,  since neither the Moreau envelope $\varphi_{\lambda}(\cdot)$ nor the proximal map $\prox_{\lambda \varphi}(\cdot)$ explicitly appear in the definition of the subgradient method.

Though our results appear to be new even in this rudimentary deterministic set up, the argument we present applies  much more  
 broadly to stochastic proximal subgradient methods, in which only stochastic estimates of $\zeta_t$ are available. This is the setting of the paper. In this regard, we improve in two fundamental ways on the results in the seminal  papers \cite{ghad,Ghadimi2016mini,wotao}: first, we allow $g$ to be nonsmooth and second, we do not require the variance of our stochastic estimator for $\zeta_t$ to decrease as a function of $t$. The second contribution removes the well-known ``mini-batching" requirements common to~\cite{Ghadimi2016mini,wotao}, while the first significantly expands the class of functions for which the rate of convergence of the stochastic proximal subgradient method is known. It is worthwhile to mention that our techniques crucially rely on convexity of $r$, while~\cite{wotao} makes no such assumption. 

There is an extensive literature on stochastic subgradient methods in convex optimization, which we will not detail here; instead, we refer the interested reader to the seminal works~\cite{doi:10.1137/070704277,complexity}. An in-depth summary of recent work for nonconvex problems appears in~\cite{prixm_guide_subgrad}.

The outline of the paper is as follows. In the Section~\ref{sec:project_subgrad}, we present a simplified argument for the case in which $r$ is the indicator function of a closed convex set and the stochastic estimator has finite second moment. In this section, we also comment on improved rates in the convex setting. In Section~\ref{sec:proxmeth}, we prove convergence of the stochastic proximal subgradient method in full generality. In Section~\ref{sec:prox_gradient}, we modify the results of the previous section to the case in which $g$ is smooth and the stochastic estimator has finite variance.

\section{Convergence guarantees}\label{sec:main}
	Henceforth, we assume that the only access to $g$ is through a stochastic subgradient oracle.  Formally, we fix a probability space $(\Omega, \cF, P)$ and equip $\RR^d$ with the Borel $\sigma$-algebra.	
We make the following three standard assumptions: 
\begin{enumerate}
	\item[(A1)]\label{it1} It is possible to generate i.i.d.\ realizations $\xi_1,\xi_2, \ldots \sim dP$.
	\item[(A2)]\label{it2} There is an open set $U$ containing $\dom r$ and a measurable mapping $G \colon U \times \Omega \rightarrow \RR^d$ satisfying  $\EE_{\xi}[G(x,\xi)]\in \partial g(x)$ for all $x\in U$.
	\item[(A3)]\label{it3} There is a real $L \geq 0$ such that the inequality, $\EE_\xi\left[ \|G(x, \xi)\|^2\right] \leq L^2$, holds for all $x \in \dom r$. 
\end{enumerate}
Some comments are in order. First, the symbol ${\partial} g(x)$ refers to the {\em subdifferential}  of $g$ at $ x$. By definition, this is the set consisting of all vectors $v\in\R^d$ satisfying
\begin{equation*} \label{eqn:weak_subdiff}
g(y)\geq g( x)+\langle v,y- x\rangle +o(\|y- x\|)\qquad \textrm{ as }y\to  x.	
\end{equation*}
Weak convexity automatically guarantees that subgradients of $g$ satisfy the much stronger property \cite[Theorem 12.17]{RW98}:
\begin{equation}\label{eqn:stronger_ineq}
g(y)\geq g(x)+\langle v,y-x\rangle-\frac{\rho}{2}\|y-x\|^2,\qquad\forall x,y\in \R^d,~v\in \partial g(x).
\end{equation}
One important consequence we will use is the hypo-monotonicity inequality:
\begin{equation}\label{eqn:stronger_ineq_hypo}
\langle v - w,x - y\rangle \geq -\rho \|x-y\|^2,\qquad\forall x,y\in \R^d,~v\in \partial g(x),~w \in \partial g(y).
\end{equation}

The three assumption (A1), (A2), (A3) are standard in the literature on stochastic subgradient methods. Indeed, assumptions (A1) and (A2) are identical to assumptions (A1) and (A2) in~\cite{doi:10.1137/070704277}, while Assumption (A3) is the same as the assumption listed in~\cite[Equation~(2.5)]{doi:10.1137/070704277}.

In this work, we investigate the efficiency of the  proximal stochastic subgradient method, described in	Algorithm~\ref{alg:subgradient}.
\smallskip

	\begin{algorithm}[H]
		\KwData{$x_0 \in \dom(r)$, a sequence $\{\alpha_t\}_{t\geq 0}\subset\R_+$, and iteration count $T$}
		{\bf Step } $t=0,\ldots,T$:\\		
		\begin{equation*}\left\{
		\begin{aligned}
		&\textrm{Sample } \xi_t \sim dP\\
		& \textrm{Set } x_{t+1}=\prox_{\alpha_t r}\left(x_{t} - \alpha_t G(x_t, \xi_t)\right)
		\end{aligned}\right\},
		\end{equation*}
		Sample $t^*\in \{0,\ldots,T\}$ according to the probability distribution
		$\mathbb{P}(t^*=t)=\frac{\alpha_t}{\sum_{t=0}^T \alpha_t}.$
		{\bf Return} $x_{t^*}$		
		\caption{Proximal stochastic subgradient method
			}
		\label{alg:subgradient}
	\end{algorithm}
\smallskip

Henceforth, the symbol $\mathbb{E}_{t}[\cdot]$ will denote the expectation conditioned on all the realizations $\xi_0,\xi_1,\ldots, \xi_{t-1}$.

\subsection{Projected stochastic subgradient method}\label{sec:project_subgrad}
Our analysis of Algorithm~\ref{alg:subgradient} is shorter and more transparent when $r$ is the indicator function of a closed, convex set $\cX$. This is not surprising, since projected subgradient methods are typically much easier to analyze than their proximal extensions (e.g. \cite{prox_subgrad_duchi,cruz_subgrad}). Note that \eqref{eqn:problem_class} then reduces to the constrained problem
\begin{equation}\label{eqn:target_constr}
\min_{x\in\cX}~ g(x),
\end{equation}
and the proximal maps $\prox_{\alpha  r}(\cdot)$ become the nearest point projection $\proj_{\cX}(\cdot)$. Thus throughout Section~\ref{sec:project_subgrad}, we suppose that Assumptions (A1), (A2), and (A3) hold and that $r(\cdot)$ is the indicator function of a closed convex set $\cX$. The following is the main result of this section.

\begin{thm}[Stochastic projected subgradient method]\label{thm:stochastic_sub}
	Let $x_{t^*}$ be the point returned by Algorithm~\ref{alg:subgradient}. 
Then in terms of any constant  $\hat \rho>\rho$, the estimate holds:
\begin{align*}
 \EE \left[\|\nabla \varphi_{1/\widehat \rho}(x_{t^*})\|^2\right]
&\leq \frac{\hat{\rho}}{\hat \rho-\rho}\cdot\frac{(\varphi_{1/\hat \rho}(x_0) - \min \varphi )+ \frac{\hat\rho L^2}{2}\sum_{t=0}^T \alpha_t^2}{\sum_{t=0}^T \alpha_t}.
\end{align*} 
\end{thm}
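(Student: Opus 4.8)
The plan is to track the evolution of the Moreau envelope $\varphi_{1/\hat\rho}$ along the iterates and show it decreases in expectation, up to an error term controlled by $\sum \alpha_t^2$. Set $\lambda = 1/\hat\rho$, and for each $t$ let $\hat x_t := \prox_{\lambda\varphi}(x_t)$ denote the proximal point, so that $\varphi_\lambda(x_t) = \varphi(\hat x_t) + \tfrac{1}{2\lambda}\|\hat x_t - x_t\|^2$ and $\nabla\varphi_\lambda(x_t) = \lambda^{-1}(x_t - \hat x_t)$ by \eqref{eqn:grad_form}. The first step is to bound $\varphi_\lambda(x_{t+1})$ by plugging $\hat x_t$ into the definition of $\varphi_\lambda(x_{t+1})$: since $\varphi_\lambda(x_{t+1}) \le \varphi(\hat x_t) + \tfrac{1}{2\lambda}\|\hat x_t - x_{t+1}\|^2$, it suffices to estimate $\EE_t\|\hat x_t - x_{t+1}\|^2$. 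Here I would use that $x_{t+1} = \proj_\cX(x_t - \alpha_t G(x_t,\xi_t))$ is a projection onto a convex set containing $\hat x_t$ (note $\hat x_t \in \cX$ since $\varphi$ takes finite values only on $\cX$), so the projection is nonexpansive and moreover $\|\hat x_t - x_{t+1}\|^2 \le \|\hat x_t - (x_t - \alpha_t G(x_t,\xi_t))\|^2$; in fact using the firm nonexpansiveness / the standard projection inequality one gets the cleaner bound $\|\hat x_t - x_{t+1}\|^2 \le \|\hat x_t - x_t\|^2 - 2\alpha_t\langle G(x_t,\xi_t), x_{t+1} - \hat x_t\rangle - \|x_{t+1}-x_t\|^2$, but expanding the square version and absorbing $\alpha_t^2\|G\|^2$ is enough.

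Expanding $\|\hat x_t - x_t + \alpha_t G(x_t,\xi_t)\|^2 = \|\hat x_t - x_t\|^2 + 2\alpha_t\langle G(x_t,\xi_t), \hat x_t - x_t\rangle + \alpha_t^2\|G(x_t,\xi_t)\|^2$ and taking $\EE_t$, assumption (A2) replaces $\EE_t G(x_t,\xi_t)$ by some $\zeta_t \in \partial g(x_t)$ in the cross term, and (A3) bounds the last term by $\alpha_t^2 L^2$. The crucial step is then to control $\langle \zeta_t, \hat x_t - x_t\rangle$ using weak convexity. Applying \eqref{eqn:stronger_ineq} with $x = x_t$, $y = \hat x_t$, $v = \zeta_t$ gives $\langle \zeta_t, \hat x_t - x_t\rangle \le g(\hat x_t) - g(x_t) + \tfrac{\rho}{2}\|\hat x_t - x_t\|^2$. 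Plugging back, and writing $\varphi = g$ on $\cX$ (recall $r$ is the indicator of $\cX$), yields
\begin{align*}
\EE_t[\varphi_\lambda(x_{t+1})] &\le \varphi(\hat x_t) + \tfrac{1}{2\lambda}\Big(\|\hat x_t - x_t\|^2 + 2\alpha_t\big(g(\hat x_t) - g(x_t) + \tfrac{\rho}{2}\|\hat x_t - x_t\|^2\big) + \alpha_t^2 L^2\Big).
\end{align*}
Since $\varphi_\lambda(x_t) = \varphi(\hat x_t) + \tfrac{1}{2\lambda}\|\hat x_t - x_t\|^2$, the leading terms combine to $\varphi_\lambda(x_t)$, and using $g(\hat x_t) - g(x_t) = \varphi(\hat x_t) - \varphi(x_t) \le -\tfrac{1}{2\lambda}\|\hat x_t - x_t\|^2$ (from $\varphi_\lambda(x_t)\le\varphi(x_t)$), the cross term becomes $\le -\tfrac{\alpha_t}{2\lambda}(1/\lambda - \rho)\|\hat x_t - x_t\|^2$. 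Recalling $\|\hat x_t - x_t\|^2 = \lambda^2\|\nabla\varphi_\lambda(x_t)\|^2$ and $\lambda = 1/\hat\rho$, this is exactly $-\tfrac{\alpha_t}{\hat\rho}\cdot\tfrac{\hat\rho-\rho}{\hat\rho}\|\nabla\varphi_{1/\hat\rho}(x_t)\|^2$ up to a factor, so rearranging gives the one-step inequality
\begin{align*}
\frac{\alpha_t(\hat\rho-\rho)}{\hat\rho^2}\,\EE_t\|\nabla\varphi_{1/\hat\rho}(x_t)\|^2 \le \varphi_\lambda(x_t) - \EE_t[\varphi_\lambda(x_{t+1})] + \frac{\alpha_t^2 L^2}{2\hat\rho}.
\end{align*}

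The final step is to take total expectations, telescope over $t = 0,\dots,T$ using $\varphi_\lambda(x_{T+1}) \ge \min\varphi$ (which holds since $\varphi_\lambda \ge \min\varphi$ always), and divide by $\tfrac{\hat\rho-\rho}{\hat\rho^2}\sum_{t=0}^T\alpha_t$; the weighted sampling distribution $\PP(t^*=t) = \alpha_t/\sum\alpha_s$ converts the weighted average $\tfrac{\sum\alpha_t\EE\|\nabla\varphi_\lambda(x_t)\|^2}{\sum\alpha_t}$ into $\EE\|\nabla\varphi_{1/\hat\rho}(x_{t^*})\|^2$, producing the stated bound. I expect the main obstacle to be bookkeeping the constants correctly — in particular making sure the weak-convexity inequality \eqref{eqn:stronger_ineq} and the bound $\varphi(\hat x_t) - \varphi(x_t) \le -\tfrac{1}{2\lambda}\|\hat x_t - x_t\|^2$ are combined so that the coefficient of $\|\hat x_t - x_t\|^2$ comes out to precisely $\tfrac{\rho - \hat\rho}{2\lambda}$ (negative because $\hat\rho > \rho$) — rather than any conceptual difficulty; the one genuinely important idea is realizing that the descent should be measured on the Moreau envelope evaluated at the iterates, with $\hat x_t$ serving as the comparison point fed into the definition of $\varphi_\lambda(x_{t+1})$.
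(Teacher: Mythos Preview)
Your approach mirrors the paper's proof almost step for step: plug $\hat x_t$ into the definition of $\varphi_{1/\hat\rho}(x_{t+1})$, use nonexpansiveness of $\proj_\cX$, expand the square, apply weak convexity \eqref{eqn:stronger_ineq} to the cross term, and telescope. The one place you lose a factor of $2$ relative to the stated constant is the step $g(\hat x_t)-g(x_t)\le -\tfrac{1}{2\lambda}\|\hat x_t-x_t\|^2$ (from $\varphi_\lambda(x_t)\le\varphi(x_t)$), which only yields $g(x_t)-g(\hat x_t)-\tfrac{\rho}{2}\|x_t-\hat x_t\|^2\ge\tfrac{\hat\rho-\rho}{2}\|x_t-\hat x_t\|^2$; the paper instead keeps the quantity $g(x_t)-g(\hat x_t)-\tfrac{\rho}{2}\|x_t-\hat x_t\|^2$ intact through the telescoping and only at the end lower-bounds it by $(\hat\rho-\rho)\|x_t-\hat x_t\|^2$ using the $(\hat\rho-\rho)$-strong convexity of $x\mapsto g(x)+\tfrac{\hat\rho}{2}\|x-x_t\|^2$ at its minimizer $\hat x_t$, recovering the exact constant.
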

\begin{proof}
	Let $x_{t}$ denote the points generates by Algorithm~\ref{alg:subgradient}. For each index $t$, define $\zeta_t := \EE_t[G(x_t, \xi)] \in \partial g(x_t)$ and set $\hat x_t:= \prox_{\varphi/\hat\rho}(x_t)$. We successively deduce
\begin{align}
\EE_t \left[\varphi_{1/\hat\rho}(x_{t+1})\right] &\leq \EE_t \left[g(\hat x_t) + \frac{\hat\rho}{2} \|x_{t+1} - \hat x_t\|^2\right] \label{eqn:prox_def} \\
&= g(\hat x_t) + \frac{\hat\rho}{2}\EE_t \left[\|\proj_{\cX}(x_{t} - \alpha_t G(x_t, \xi_t)) -  \proj_{\cX}(\hat x_t)\|^2\right] \notag\\
&\leq g(\hat x_t) + \frac{\hat \rho}{2}\EE_t \left[\|(x_{t}  - \hat x_t)- \alpha_t G(x_t, \xi_t)\|^2 \right]\label{eqn:nonexp_proof1}\\
&\leq g(\hat x_t) + \frac{\hat\rho}{2} \|x_{t} - \hat x_t\|^2 + \hat\rho\alpha_t\EE_t \left[\dotp{\hat x_t-x_t , G(x_t, \xi_t)}\right] + \frac{\alpha_t^2\hat\rho}{2}L^2\notag \\
&\leq \varphi_{1/\hat\rho}(x_{t})+ \hat\rho \alpha_t \dotp{\hat x_t - x_t , \zeta_t } + \frac{\alpha_t^2\hat\rho}{2}L^2 \notag\\
&\leq \varphi_{1/\hat\rho}(x_{t}) + \hat\rho \alpha_t \left(g(\hat x_t)-g(x_t)+\frac{\rho}{2}\|x_t-\hat x_t\|^2\right) + \frac{\alpha_t^2\hat\rho}{2}L^2, \label{eqn:weak_conv_proof}
\end{align}
where \eqref{eqn:prox_def} follows directly from the definition of the proximal map, the inequality \eqref{eqn:nonexp_proof1} uses that the projection $\proj_{\cX}(\cdot)$ is $1$-Lipschitz, and \eqref{eqn:weak_conv_proof} follows from weak convexity of $g$.


Using the law of total expectation to unfold this recursion yields: 
\begin{align*}
&\EE \left[\varphi_{1/\hat\rho}(x_{T+1})\right] \leq \varphi_{1/\hat\rho}(x_{0}) + \frac{\hat\rho L^2}{2}\sum_{t = 0}^T \alpha_t^2 - \hat\rho\EE\sum_{t = 0}^T\alpha_t \left(g(x_t) - g(\hat x_t) - \frac{\rho}{2}\|x_t - \hat x_t\|^2\right).
\end{align*} 
Lower-bounding the left-hand side by $\min \varphi$ and rearranging, we obtain the bound: 
\begin{equation}\label{eqn:main_ineq}
\begin{aligned}
 \frac{1}{\sum_{t=0}^T \alpha_t}\sum_{t=0}^T \alpha_t \EE\left[g(x_t) - g(\hat x_t) - \frac{\rho}{2}\|x_t - \hat x_t\|^2\right]
&\leq \frac{(\varphi_{1/\hat\rho}(x_{0}) - \min \varphi) +  \frac{\hat\rho L^2}{2}\sum_{t=0}^T \alpha_t^2}{\hat\rho\sum_{t=0}^T \alpha_t}.
\end{aligned}
\end{equation}
Notice that the left-hand-side of \eqref{eqn:main_ineq} is precisely  $\EE\left[g(x_{t^*}) - g(\hat x_{t^*}) - \frac{\rho}{2}\|x_{t^*} - \hat x_{t^*}\|^2\right]$.
Next, observe that the function $x\mapsto g(x)+\frac{\hat \rho}{2}\|x-x_{t^*}\|^2$ is  strongly convex with parameter $\hat\rho-\rho$, and therefore
\begin{align*}
&g(x_{t^*}) - g(\hat x_{t^*}) - \frac{\rho}{2}\|x_{t^*} - \hat x_{t^*}\|^2\\
&=
\left(g(x_{t^*})+\frac{\hat\rho}{2}\|x_{t^*} - x_{t^*}\|^2\right) - \left( g(\hat x_{t^*}) + \frac{\hat\rho}{2}\|x_{t^*} - \hat x_{t^*}\|^2\right) + \frac{\hat \rho - \rho}{2}\|x_{t^*} - \hat x_{t^*}\|^2\\
&\geq (\hat \rho-\rho)\|x_{t^*} - \hat x_{t^*}\|^2 = \frac{\hat \rho-\rho}{\hat\rho^2}\|\nabla \varphi_{1/\hat \rho}(x_{t^*})\|^2,
\end{align*}
 where the last equality follows from~\eqref{eqn:grad_form}. Using this estimate to lower bound the left-hand-side of \eqref{eqn:main_ineq} completes the proof.
\end{proof}

In particular, using the constant stepsize $\alpha$ on the order of $\frac{1}{\sqrt{T+1}}$ yields the following complexity guarantee. 

\begin{cor}[Complexity guarantee]\label{cor:complexity}
		
		Fix an index $T > 0$ and set the constant steplength $\alpha=\frac{\gamma}{\sqrt{T+1}}$ for some real $\gamma>0$. Then the point $x_{t^*}$ returned by Algorithm~\ref{alg:subgradient} satisfies:
		\begin{equation}\label{eqn:compl_proj}
		\EE\left[\|\nabla \varphi_{1/(2\rho) }(x_{t^*})\|^2\right]
		\leq 2\cdot\frac{\left(\varphi_{1/(2\rho)}(x_0) - \min \varphi\right)+ \rho L^2\gamma^2}{\gamma\sqrt{T+1}}.
		\end{equation} 
\end{cor}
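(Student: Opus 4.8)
The plan is to derive the corollary as a direct specialization of Theorem~\ref{thm:stochastic_sub}. First I would choose the free constant $\hat\rho$ in the theorem to be $\hat\rho = 2\rho$; this makes the Moreau envelope parameter $1/\hat\rho$ equal to $1/(2\rho)$, matching the quantity appearing in \eqref{eqn:compl_proj}, and it makes the prefactor $\frac{\hat\rho}{\hat\rho-\rho} = \frac{2\rho}{2\rho-\rho} = 2$, which is exactly the factor of $2$ on the right-hand side of the claim.

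Next I would substitute the constant stepsize $\alpha_t \equiv \alpha = \gamma/\sqrt{T+1}$ into the bound of Theorem~\ref{thm:stochastic_sub}. With $T+1$ terms, one has $\sum_{t=0}^T \alpha_t = (T+1)\alpha = \gamma\sqrt{T+1}$ and $\sum_{t=0}^T \alpha_t^2 = (T+1)\alpha^2 = \gamma^2$. Plugging these into the numerator and denominator of the theorem's estimate, the $\frac{\hat\rho L^2}{2}\sum_{t=0}^T\alpha_t^2$ term becomes $\frac{2\rho L^2}{2}\gamma^2 = \rho L^2\gamma^2$, and dividing by $\sum_{t=0}^T\alpha_t = \gamma\sqrt{T+1}$ yields precisely
\[
\EE\left[\|\nabla\varphi_{1/(2\rho)}(x_{t^*})\|^2\right] \leq 2\cdot\frac{\left(\varphi_{1/(2\rho)}(x_0)-\min\varphi\right) + \rho L^2\gamma^2}{\gamma\sqrt{T+1}},
\]
which is \eqref{eqn:compl_proj}.

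Since every step is a substitution of explicit values into an already-proved inequality, there is essentially no obstacle here; the only thing to be careful about is the bookkeeping of the index range (there are $T+1$ steps $t=0,\dots,T$, not $T$), so that the geometric-mean-to-arithmetic-mean style balancing of the two terms in $T$ comes out with the stated constants. One could additionally remark — as the corollary's role in the paper suggests — that optimizing over $\gamma$ gives the sharpest constant, but this is not needed for the statement as written.
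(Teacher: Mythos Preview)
Your proposal is correct and matches the paper's own argument exactly: the paper's proof is the single line ``This follows immediately from Theorem~\ref{thm:stochastic_sub} by setting $\hat\rho=2\rho$,'' and you have simply spelled out the accompanying arithmetic for the sums $\sum_{t=0}^T\alpha_t$ and $\sum_{t=0}^T\alpha_t^2$.
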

\begin{proof}
This follows immediately from Theorem~\ref{thm:stochastic_sub} by setting $\hat \rho=2\rho$.
\end{proof}

Let us look closer at the guarantee of Corollary~\ref{cor:complexity} by minimizing out in $\gamma$. Namely, suppose we have available some real $ R>0$
satisfying $R\geq \varphi_{1/(2\rho)}(x_0) - \min \varphi$. We deduce from \eqref{eqn:compl_proj} the estimate,
$\EE\left[\|\nabla \varphi_{1/(2\rho) }(x_{t^*})\|^2\right]
\leq 2\cdot\frac{R+ \rho L^2\gamma^2}{\gamma\sqrt{T+1}}.$
 Minimizing the right-hand side in $\gamma$ yields the choice
$\gamma=\sqrt{\frac{R}{\rho L^2}}$ 
and therefore the guarantee
\begin{equation}\label{eqn:comp_bound_optim_proj}
\EE\left[\|\nabla \varphi_{1/(2\rho) }(x_{t^*})\|^2\right]\leq 4\cdot\sqrt{\frac{\rho R L^2}{T+1}}.
\end{equation}
In particular, suppose that $g$ is $L$-Lipschitz and the diameter of $\cX$ is bounded by some $D>0$. Then we may set $R:=  \min \left\{ \rho D^2, DL\right\}$, where the first term follows from the definition of the Moreau envelope and the second follows from Lipschitz continuity. 
Then the  number of subgradient evaluations required to find a point $x$ satisfying 
$\EE\|\nabla \varphi_{1/(2\rho) }(x)\|\leq \varepsilon$ is at most 
\begin{equation}\label{eqn:comple1}
\left\lceil 16\cdot\frac{(\rho L D)^2\cdot\min\left\{1,\tfrac{L}{\rho D}\right\}}{\varepsilon^4}\right\rceil.
\end{equation}
This complexity in $\varepsilon$ matches the guarantees of the stochastic gradient method for finding an $\varepsilon$-stationary point of a smooth function \cite[Corollary 2.2]{ghad}.


It is intriguing to ask if the complexity \eqref{eqn:comple1} can be improved when $g$ is a convex function. The answer, unsurprisingly, is yes. Since $g$ is convex, here and for the rest of the section, we will let the constant $\rho>0$ be arbitrary. As a first attempt, one may follow the observation of Nesterov \cite{nest_optima} for smooth minimization. The idea is that the right-hand-side of the complexity bound \eqref{eqn:comp_bound_optim_proj} dependence on the initial gap $\varphi(x_0)-\min \varphi$. We can make this quantity as small as we wish by a separate subgradient method.
Namely, we may simply run a subgradient method for $T$ iterations to decrease the gap $\varphi(x_0)-\min \varphi$ to $R:=LD/\sqrt{T+1}$; see for example \cite[Proposition 5.5]{nem_jud} for the this basic guarantee. Then we run another round of a subgradient method for $T$ iterations using the optimal choice $\gamma:=\sqrt{\frac{R}{\rho L^2}}$. A quick computation shows that the resulting scheme will find a point $x$ satisfying $\EE\|\nabla\varphi_{1/(2\rho)}(x)\|\leq \varepsilon$ after at most $O(1)\cdot\frac{L^2(\rho D)^{2/3}}{\varepsilon^{8/3}}$ iterations. 

This complexity can be improved slightly by first regularizing the problem. We will only outline the procedure here,  since the details are standard and easy to verify. Define the function $\widehat{\varphi}:=\varphi+\frac{\mu}{2}\|\cdot-x_{\rm c}\|^2$, for some $\mu>0$ and arbitrary $x_{\rm c}\in \cX$. We will apply optimization algorithms to $\widehat{\varphi}$ instead of $\varphi$, and therefore we must relate their Moreau envelopes.
 Fixing an arbitrary $\lambda>0$, it is straightforward to verify the following equality  by completing the square in the Moreau envelope:
$$\widehat{\varphi}_{1/\lambda}(x)=\varphi_{1/{(\lambda+\mu)}}\left(\tfrac{\mu}{\mu+\lambda}x_{\rm c}+\tfrac{\lambda}{\mu+\lambda}x\right)+\tfrac{\lambda\mu}{2(\mu+\lambda)}\|x-x_{\rm c}\|^2.$$ Differentiating in $x$ yields the bound
\begin{equation}\label{eqn:better_obtain_grad_est}
\left\|\nabla\varphi_{1/(\lambda+\mu)}\left(\tfrac{\mu}{\mu+\lambda} x_{\rm c}+\tfrac{\lambda}{\mu+\lambda}x\right)\right\|\leq \tfrac{\lambda+\mu}{\lambda}\|\nabla \widehat{\varphi}_{1/\lambda}(x)\|+\mu D.
\end{equation}
Thus, supposing $\varepsilon\leq 2\rho D$, we may set $\mu=\frac{\varepsilon}{2D}$ and $\lambda=2\rho-\frac{\varepsilon}{2D}$,
 obtaining the estimate
$$\left\|\nabla\varphi_{1/(2\rho)}\left(\tfrac{\mu}{\mu+\lambda} x_{\rm c}+\tfrac{\lambda}{\mu+\lambda}x\right)\right\|\leq 2\|\nabla \widehat{\varphi}_{1/\lambda}(x)\|+\tfrac{\varepsilon}{2}.$$
Hence, if we find a point $x$ satisfying $\EE\|\nabla \widehat{\varphi}_{1/\lambda}(x)\|\leq \tfrac{\varepsilon}{4}$, then the convex combination $z:=\tfrac{\mu}{\mu+\lambda} x_{\rm c}+\tfrac{\lambda}{\mu+\lambda}x$ would satisfy $\EE\|\nabla {\varphi}_{1/(2\rho)}(z)\|\leq \varepsilon$, as desired. Let us now apply the two-stage procedure on the strongly convex function $\widehat{\varphi}$. We first apply the projected stochastic subgradient method \cite{subgad_easier} for $T$ iterations yielding the estimate $R:=\frac{4(L^2 + \mu^2D^2) }{\mu(T+1)} = \frac{2D(4L^2 + \varepsilon^2) }{\varepsilon(T+1)}$. Then we apply the subgradient method (Algorithm~\ref{alg:subgradient}) for $T$ iterations on $\widehat{\varphi}$ with an optimal step-size $\gamma$. Solving for $T$ in terms of $\varepsilon$, a quick computation shows that the resulting scheme will find a point $z$ satisfying $\EE\|\nabla\varphi_{1/(2\rho)}(z)\|\leq \varepsilon$ after at most $O(1)\cdot \frac{(L^2+\varepsilon^2)\sqrt{\rho D}}{\varepsilon^{2.5}}$ iterations. By following a completely different technique, introduced by Allen-Zhu \cite{makegradsmall_zhu} for smooth stochastic minimization, this complexity can be even further improved to $\widetilde{O}\left(\frac{(L^2+\rho^2D^2)\log^3(\frac{\rho D}{\varepsilon})}{\varepsilon^2}\right)$ by running logarithmically many rounds of the subgradient method.  Since this procedure and its analysis is somewhat technical and is independent of the rest of the material, we have placed it in a supplementary text that can be found at \texttt{www.math.washington.edu/{\raise.17ex\hbox{$\scriptstyle\sim$}}ddrusv/sms.pdf}

\subsection{Proximal stochastic subgradient method}\label{sec:proxmeth}

We next move on to convergence guarantees of Algorithm~\ref{alg:subgradient} in full generality -- the main result of this work. To this end, in this section, in addition to assumptions (A1), (A2), and (A3) we will also  assume that $g$ is $L$-Lipschitz. 

We break up the analysis of Algorithm~\ref{alg:subgradient} into two lemmas. Henceforth, fix a real  $\hat \rho>\rho$. Let $x_t$  be the iterates produced by Algorithm~\ref{alg:subgradient} and let $\xi_t\sim dP$ be the i.i.d. realizations used. For each index $t$, define $\zeta_t := \EE_t[G(x_t, \xi)] \in \partial g(x_t)$ and set $\hat x_t:= \prox_{\varphi/\hat\rho}(x_t)$. Observe that by the optimality conditions of the proximal map, there exists a vector $\hat \zeta_t \in \partial g(\hat x_t)$ satisfying $\hat \rho(x_t - \hat x_t)  \in \partial r(\hat x_t) +\hat  \zeta_t$. The following lemma realizes $\hat x_t$ as a proximal point of $r$.
 
\begin{lem}\label{lem:prox_ident}
	For each index $t\geq 0$, equality holds:
	$$
	\hat x_t = \prox_{\alpha_t r}\left(\alpha_t \hat \rho x_t - \alpha_t\hat \zeta_t + (1-\alpha_t \hat \rho)\hat x_t\right).$$
\end{lem}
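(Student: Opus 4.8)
The plan is to verify the identity directly from the definition of $\prox_{\alpha_t r}$ by checking the first-order optimality condition. Recall that for a point $y$, we have $y = \prox_{\alpha_t r}(z)$ if and only if $z - y \in \alpha_t \partial r(y)$, equivalently $\frac{1}{\alpha_t}(z - y) \in \partial r(y)$. So with the candidate $y = \hat x_t$ and $z = \alpha_t \hat\rho x_t - \alpha_t \hat\zeta_t + (1 - \alpha_t\hat\rho)\hat x_t$, I would compute $z - \hat x_t = \alpha_t\hat\rho x_t - \alpha_t\hat\zeta_t + (1 - \alpha_t\hat\rho)\hat x_t - \hat x_t = \alpha_t\hat\rho(x_t - \hat x_t) - \alpha_t\hat\zeta_t = \alpha_t\bigl(\hat\rho(x_t - \hat x_t) - \hat\zeta_t\bigr)$. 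Therefore $\frac{1}{\alpha_t}(z - \hat x_t) = \hat\rho(x_t - \hat x_t) - \hat\zeta_t$.

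The remaining step is to observe that this vector lies in $\partial r(\hat x_t)$, which is exactly the content of the optimality condition for $\hat x_t = \prox_{\varphi/\hat\rho}(x_t)$ stated in the paragraph preceding the lemma: there exists $\hat\zeta_t \in \partial g(\hat x_t)$ with $\hat\rho(x_t - \hat x_t) \in \partial r(\hat x_t) + \hat\zeta_t$, i.e. $\hat\rho(x_t - \hat x_t) - \hat\zeta_t \in \partial r(\hat x_t)$. Combining, $\frac{1}{\alpha_t}(z - \hat x_t) \in \partial r(\hat x_t)$, which is precisely the optimality condition certifying $\hat x_t = \prox_{\alpha_t r}(z)$. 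Since $r$ is closed and convex, the proximal map is single-valued and this optimality condition is both necessary and sufficient, so the identity follows.

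I do not anticipate a genuine obstacle here; the only point requiring care is making sure the scaling of the step $\alpha_t$ in the proximal subproblem $\min_y \{r(y) + \frac{1}{2\alpha_t}\|y - z\|^2\}$ matches the $\frac{1}{\alpha_t}$ factor in the subgradient inclusion, and that $\hat\zeta_t$ is the same vector appearing in the optimality condition for $\prox_{\varphi/\hat\rho}(x_t)$ — both of which are just bookkeeping. If I wanted to be completely explicit I could instead write $\hat x_t$ as the minimizer of $y \mapsto r(y) + \frac{1}{2\alpha_t}\|y - z\|^2$ and check that $0$ belongs to its subdifferential $\partial r(\hat x_t) + \frac{1}{\alpha_t}(\hat x_t - z)$ at $y = \hat x_t$, which by convexity is equivalent to global optimality. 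Either phrasing gives the result in a couple of lines.
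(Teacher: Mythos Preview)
Your proposal is correct and is essentially identical to the paper's own proof: both start from the inclusion $\hat\rho(x_t-\hat x_t)-\hat\zeta_t\in\partial r(\hat x_t)$ defining $\hat\zeta_t$, rewrite it (after scaling by $\alpha_t$) as $z-\hat x_t\in\alpha_t\partial r(\hat x_t)$ with $z=\alpha_t\hat\rho x_t-\alpha_t\hat\zeta_t+(1-\alpha_t\hat\rho)\hat x_t$, and recognize this as the optimality condition characterizing $\hat x_t=\prox_{\alpha_t r}(z)$. The paper presents the same algebra as a two-line chain of equivalences, but the content is the same.
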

\begin{proof}
	By the definition of $\hat \zeta_t$, we have 
	\begin{align*}
	\alpha_t \hat \rho(x_t - \hat x_t)  \in \alpha_t\partial r(\hat x_t) + \alpha_t\hat \zeta_t
	&\iff \alpha_t \hat \rho x_t - \alpha_t\hat \zeta_t + (1-\alpha_t \hat \rho)\hat x_t  \in \hat x_t+\alpha_t\partial r(\hat x_t) \\
	&\iff\hat x_t = \prox_{\alpha_t r}(\alpha_t \hat \rho x_t - \alpha_t\hat \zeta_t + (1-\alpha_t \hat \rho)\hat x_t),
	\end{align*}
	where the last equivalence follows from the optimality conditions for the proximal subproblem. This completes the proof. 
\end{proof}

The next lemma establishes a crucial descent property for the iterates. 
\begin{lem}\label{lem:descent}
	Suppose $\hat\rho \in (\rho,2\rho]$ and we have $\alpha_t\leq {\hat \rho}^{-1}$ for all indices $t\geq 0$. Then the inequality holds:
	\begin{align*}
	\EE_t\|x_{t+1} - \hat x_t\|^2 
	&\leq  \|x_t - \hat x_t \|^2  + 2\alpha_t^2 L^2 - 2\alpha_t(\hat  \rho - \rho)\|x_t - \hat x_t\|^2.
	\end{align*}
\end{lem}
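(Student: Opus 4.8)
The plan is to bound $\EE_t\|x_{t+1}-\hat x_t\|^2$ by writing both $x_{t+1}$ and $\hat x_t$ as proximal points of $\alpha_t r$ and exploiting the nonexpansiveness of $\prox_{\alpha_t r}$. By definition of the algorithm, $x_{t+1}=\prox_{\alpha_t r}(x_t-\alpha_t G(x_t,\xi_t))$, and by Lemma~\ref{lem:prox_ident} we have $\hat x_t = \prox_{\alpha_t r}\left(\alpha_t\hat\rho x_t - \alpha_t\hat\zeta_t + (1-\alpha_t\hat\rho)\hat x_t\right)$. Since $\prox_{\alpha_t r}$ is $1$-Lipschitz, I would estimate
\begin{equation*}
\|x_{t+1}-\hat x_t\|^2 \leq \big\|\left(x_t-\alpha_t G(x_t,\xi_t)\right) - \left(\alpha_t\hat\rho x_t - \alpha_t\hat\zeta_t + (1-\alpha_t\hat\rho)\hat x_t\right)\big\|^2.
\end{equation*}
The vector inside simplifies: group the $x_t$ and $\hat x_t$ terms to get $(1-\alpha_t\hat\rho)(x_t-\hat x_t) - \alpha_t\big(G(x_t,\xi_t)-\hat\zeta_t\big)$.

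Next I would expand this square as $(1-\alpha_t\hat\rho)^2\|x_t-\hat x_t\|^2 - 2\alpha_t(1-\alpha_t\hat\rho)\dotp{x_t-\hat x_t, G(x_t,\xi_t)-\hat\zeta_t} + \alpha_t^2\|G(x_t,\xi_t)-\hat\zeta_t\|^2$, then take $\EE_t[\cdot]$. Using $\EE_t[G(x_t,\xi_t)]=\zeta_t\in\partial g(x_t)$ turns the cross term into $-2\alpha_t(1-\alpha_t\hat\rho)\dotp{x_t-\hat x_t,\zeta_t-\hat\zeta_t}$; here the sign assumption $\alpha_t\le\hat\rho^{-1}$ ensures $1-\alpha_t\hat\rho\ge 0$, so the hypo-monotonicity inequality \eqref{eqn:stronger_ineq_hypo} with $\zeta_t\in\partial g(x_t)$ and $\hat\zeta_t\in\partial g(\hat x_t)$ gives $\dotp{x_t-\hat x_t,\zeta_t-\hat\zeta_t}\ge -\rho\|x_t-\hat x_t\|^2$, hence $-2\alpha_t(1-\alpha_t\hat\rho)\dotp{x_t-\hat x_t,\zeta_t-\hat\zeta_t}\le 2\alpha_t(1-\alpha_t\hat\rho)\rho\|x_t-\hat x_t\|^2$. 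For the last term, I would use $\|G(x_t,\xi_t)-\hat\zeta_t\|^2\le 2\|G(x_t,\xi_t)\|^2+2\|\hat\zeta_t\|^2$ and take expectations: Assumption (A3) bounds $\EE_t\|G(x_t,\xi_t)\|^2\le L^2$, and $L$-Lipschitzness of $g$ bounds $\|\hat\zeta_t\|\le L$ since $\hat\zeta_t\in\partial g(\hat x_t)$; thus $\alpha_t^2\EE_t\|G(x_t,\xi_t)-\hat\zeta_t\|^2\le 4\alpha_t^2 L^2$.

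Combining, I get $\EE_t\|x_{t+1}-\hat x_t\|^2 \le \big((1-\alpha_t\hat\rho)^2 + 2\alpha_t\rho(1-\alpha_t\hat\rho)\big)\|x_t-\hat x_t\|^2 + 4\alpha_t^2 L^2$. It remains to simplify the coefficient. Writing $c=\alpha_t\hat\rho\in[0,1]$, the coefficient is $(1-c)^2 + 2(\rho/\hat\rho)c(1-c) = (1-c)\big(1-c+2(\rho/\hat\rho)c\big)$; since $\hat\rho\le 2\rho$ we have $\rho/\hat\rho\ge 1/2$, and I want to compare this to $1 - 2\alpha_t(\hat\rho-\rho) = 1-2c(1-\rho/\hat\rho)$. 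A direct algebraic check — expand both and subtract — should show the coefficient is at most $1-2\alpha_t(\hat\rho-\rho)$ under $c\le 1$; the leftover is a term like $c^2(1-2\rho/\hat\rho)\le 0$ which is where $\hat\rho\le 2\rho$ is used. This yields the claimed bound, up to possibly a constant $2$ in the $L^2$ term which matches the statement's $2\alpha_t^2 L^2$ — so I would need to be slightly more careful with the split $\|G-\hat\zeta\|^2$, perhaps absorbing using $\|a-b\|^2\le \|a\|^2 + \|b\|^2$ in expectation is false, but noting $\EE_t\|G(x_t,\xi_t)-\hat\zeta_t\|^2 = \EE_t\|G(x_t,\xi_t)\|^2 - 2\dotp{\zeta_t,\hat\zeta_t} + \|\hat\zeta_t\|^2$ and then bounding more tightly, or rather keeping the cross term $-2\alpha_t\dotp{x_t-\hat x_t,\hat\zeta_t}$-type manipulation differently. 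The main obstacle is this bookkeeping: getting the constant in front of $\alpha_t^2 L^2$ down to exactly $2$ rather than $4$, which likely requires combining the $-\alpha_t\hat\zeta_t$ contribution with the $(1-\alpha_t\hat\rho)(x_t-\hat x_t)$ term via a smarter grouping (e.g. recognizing $x_t - \alpha_t\hat\zeta_t$-type expressions) or by using that $\dotp{\zeta_t - \hat\zeta_t, \hat\zeta_t}$ can be controlled, rather than the crude triangle-inequality split.
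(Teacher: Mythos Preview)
Your approach is exactly the paper's: invoke Lemma~\ref{lem:prox_ident}, use nonexpansiveness of $\prox_{\alpha_t r}$, expand the square, apply hypo-monotonicity~\eqref{eqn:stronger_ineq_hypo} to the cross term, and then simplify the coefficient to $1-2\alpha_t(\hat\rho-\rho)-\alpha_t^2\hat\rho(2\rho-\hat\rho)$, dropping the last (nonnegative) term via $\hat\rho\le 2\rho$. Regarding the constant you flag as an obstacle: the paper bounds $\EE_t\|G(x_t,\xi_t)-\hat\zeta_t\|^2$ by $2L^2$ with no device beyond (A3) and $\|\hat\zeta_t\|\le L$, so by the same split you used it actually delivers $4L^2$; this appears to be a harmless slip in the paper, and your $4\alpha_t^2 L^2$ is what the argument genuinely yields, so there is no missing idea to hunt for.
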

\begin{proof}
	We successively deduce
	\begin{align}
	&\EE_t\|x_{t+1} - \hat x_t\|^2\notag\\
	&=\EE_t\|\prox_{\alpha_t r} (x_t- \alpha_t G(x_t,\xi_t)) -  \prox_{\alpha_t r}(\alpha_t \hat \rho x_t - \alpha_t\hat \zeta_t + (1-\alpha_t \hat \rho)\hat x_t)\|^2\label{eqn:prox_real}\\
	&\leq \EE_t\|x_t - \alpha_t G(x_t, \xi_t) - (\alpha_t \hat \rho x_t - \alpha_t\hat \zeta_t + (1-\alpha_t \hat \rho)\hat x_t)\|^2\label{eqn:non_expproof_2} \\
	&= \EE_t\|(1-\alpha_t\hat  \rho)(x_t - \hat x_t) - \alpha_t( G(x_t, \xi_t) - \hat \zeta_t)\|^2\label{eqn:est_need_later}\\
	&= (1-\alpha_t\hat  \rho)^2\|x_t - \hat x_t \|^2 - 2(1-\alpha_t\hat  \rho)\alpha_t\EE_t\left[\dotp{x_t - \hat x_t, G(x_t, \xi_t) - \hat \zeta_t}\right] + \alpha_t^2 \EE_t\| G(x_t, \xi_t) - \hat \zeta_t\|^2\notag \\
	&= (1-\alpha_t\hat  \rho)^2\|x_t - \hat x_t \|^2 - 2(1-\alpha_t\hat  \rho)\alpha_t\dotp{x_t - \hat x_t, \zeta_t - \hat \zeta_t} + \alpha_t^2 \EE_t\| G(x_t, \xi_t) - \hat \zeta_t\|^2 \notag\\
	&\leq (1-\alpha_t\hat  \rho)^2\|x_t - \hat x_t \|^2 + 2(1-\alpha_t\hat  \rho)\alpha_t\rho \|x_t - \hat x_t\|^2 + 2\alpha_t^2 L^2\label{eqn:hypomon}  \\
	&= \|x_t - \hat x_t \|^2  + 2\alpha_t^2 L^2 - (2\alpha_t(\hat  \rho - \rho)  + \alpha_t^2\hat \rho( 2 \rho-\hat \rho))\|x_t - \hat x_t\|^2,\notag
	\end{align}
	where \eqref{eqn:prox_real} follows from Lemma~\ref{lem:prox_ident}, the  inequality \eqref{eqn:non_expproof_2} uses that $\prox_{\alpha_t r}(\cdot)$ is $1$-Lipschitz~\cite[Proposition 12.19]{RW98}, and 
	\eqref{eqn:hypomon} follows from the inequality~\eqref{eqn:stronger_ineq_hypo}. The result now follows from the assumed inequality $\hat \rho \leq 2\rho$. 
\end{proof}

With Lemma~\ref{lem:descent} proved, we can now establish convergence guarantees of Algorithm~\ref{alg:subgradient} in full generality. 

\begin{thm}[Stochastic proximal subgradient method]\label{thm:stochastic_sub2}
Fix a real $\hat \rho\in (\rho,2\rho]$ and a
stepsize sequence $\alpha_t\in (0,\hat \rho^{-1}]$.
Then the point $x_{t^*}$ returned by Algorithm~\ref{alg:subgradient} satisfies:
\begin{align*}
 \EE \left[\|\nabla \varphi_{1/\widehat \rho}(x_{t^*})\|^2\right]
&\leq \frac{\hat{\rho}}{\hat \rho-\rho}\cdot\frac{(\varphi_{1/\hat \rho}(x_0) - \min \varphi) + \hat\rho L^2\sum_{t=0}^T \alpha_t^2}{\sum_{t=0}^T \alpha_t}.
\end{align*} 
\end{thm}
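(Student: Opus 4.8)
The plan is to mirror the structure of the proof of Theorem~\ref{thm:stochastic_sub}, replacing the $\cX$-specific non-expansiveness argument there with the descent estimate already established in Lemma~\ref{lem:descent}. First I would note that $\varphi_{1/\hat\rho}(x_{t+1}) \le \varphi(\hat x_t) + \frac{\hat\rho}{2}\|x_{t+1}-\hat x_t\|^2$ directly from the definition of the Moreau envelope, since $\hat x_t$ is a feasible point in the defining minimization at $x_{t+1}$. Taking conditional expectation $\EE_t[\cdot]$ and invoking Lemma~\ref{lem:descent} gives
\begin{align*}
\EE_t\left[\varphi_{1/\hat\rho}(x_{t+1})\right] &\le \varphi(\hat x_t) + \frac{\hat\rho}{2}\Big(\|x_t-\hat x_t\|^2 + 2\alpha_t^2 L^2 - 2\alpha_t(\hat\rho-\rho)\|x_t-\hat x_t\|^2\Big)\\
&= \varphi_{1/\hat\rho}(x_t) + \hat\rho\alpha_t^2 L^2 - \hat\rho\alpha_t(\hat\rho-\rho)\|x_t-\hat x_t\|^2,
\end{align*}
using $\varphi(\hat x_t) + \frac{\hat\rho}{2}\|x_t-\hat x_t\|^2 = \varphi_{1/\hat\rho}(x_t)$ by definition of the proximal point. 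Here I should double-check that the hypotheses of Lemma~\ref{lem:descent} are in force: we need $\hat\rho\in(\rho,2\rho]$ and $\alpha_t\le\hat\rho^{-1}$, both of which are exactly the standing assumptions of the theorem.

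Next I would unfold this recursion using the tower property of expectation over $t=0,\dots,T$, yielding
\[
\min\varphi \le \EE\left[\varphi_{1/\hat\rho}(x_{T+1})\right] \le \varphi_{1/\hat\rho}(x_0) + \hat\rho L^2\sum_{t=0}^T\alpha_t^2 - \hat\rho(\hat\rho-\rho)\sum_{t=0}^T\alpha_t\,\EE\|x_t-\hat x_t\|^2,
\]
where the left-hand inequality lower-bounds $\varphi_{1/\hat\rho}$ by $\min\varphi$ (valid since $\hat\rho^{-1}<\rho^{-1}$, so the envelope is well-defined and bounded below by $\min\varphi$). Rearranging, dividing by $\hat\rho(\hat\rho-\rho)\sum_{t}\alpha_t$, and recognizing the weighted average $\frac{1}{\sum_t\alpha_t}\sum_t\alpha_t\EE\|x_t-\hat x_t\|^2$ as precisely $\EE\|x_{t^*}-\hat x_{t^*}\|^2$ by the sampling rule for $t^*$ gives
\[
\EE\left[\|x_{t^*}-\hat x_{t^*}\|^2\right] \le \frac{(\varphi_{1/\hat\rho}(x_0)-\min\varphi)+\hat\rho L^2\sum_{t=0}^T\alpha_t^2}{\hat\rho(\hat\rho-\rho)\sum_{t=0}^T\alpha_t}.
\]
Finally I would convert the distance bound into a gradient bound via the identity $\|\nabla\varphi_{1/\hat\rho}(x)\| = \hat\rho\|x-\prox_{\varphi/\hat\rho}(x)\|$ from \eqref{eqn:grad_form}, i.e. $\|\nabla\varphi_{1/\hat\rho}(x_{t^*})\|^2 = \hat\rho^2\|x_{t^*}-\hat x_{t^*}\|^2$, multiplying the displayed bound by $\hat\rho^2$ to obtain exactly the claimed estimate.

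I do not anticipate a genuine obstacle here, since Lemma~\ref{lem:descent} already packages the only delicate computation (the proximal non-expansiveness combined with hypo-monotonicity and the choice of $\hat x_t$ as a proximal point of $r$ via Lemma~\ref{lem:prox_ident}). The one point requiring minor care is bookkeeping the conditioning: $\hat x_t$, $\zeta_t$, and $\hat\zeta_t$ are all measurable with respect to $\xi_0,\dots,\xi_{t-1}$, so they behave as constants under $\EE_t[\cdot]$, and the law of total expectation then telescopes cleanly. One should also confirm that $\varphi_{1/\hat\rho}(x_0)$ is finite, which holds because $x_0\in\dom r$ and $g$ is finite-valued. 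Compared to Theorem~\ref{thm:stochastic_sub}, the factor $2$ on the $L^2$ term (here $\hat\rho L^2$ rather than $\frac{\hat\rho L^2}{2}$) traces back to the $2\alpha_t^2L^2$ in Lemma~\ref{lem:descent}, which is the price paid for handling a general proximable $r$ rather than an indicator function.
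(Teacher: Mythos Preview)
Your proposal is correct and follows essentially the same approach as the paper's own proof: use the envelope inequality $\varphi_{1/\hat\rho}(x_{t+1})\le \varphi(\hat x_t)+\tfrac{\hat\rho}{2}\|x_{t+1}-\hat x_t\|^2$, apply Lemma~\ref{lem:descent}, telescope via the tower property, lower-bound by $\min\varphi$, and convert using \eqref{eqn:grad_form}. The structure, the intermediate recursion, and the final conversion all match.
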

\begin{proof}

We successively observe
\begin{align*}
\EE_t \left[\varphi_{1/\hat\rho}(x_{t+1})\right] &\leq \EE_t \left[\varphi(\hat x_t) + \frac{\hat\rho}{2} \|x_{t+1} - \hat x_t\|^2\right]  \\
&\leq \varphi(\hat x_t) + \frac{\hat \rho}{2}\left[\|x_t - \hat x_t \|^2+2\alpha_t^2L ^2 - 2\alpha_t(\hat  \rho - \rho)\|x_t - \hat x_t\|^2\right]\\
&= \varphi_{1/\hat{\rho}}( x_t)+\hat \rho\left[\alpha_t^2L ^2 - \alpha_t(\hat  \rho - \rho)\|x_t - \hat x_t\|^2\right],
\end{align*}
where the first inequality follows directly from the definition of the proximal map and the second follows from Lemma~\ref{lem:descent}.


Using the law of total expectation to unfold this recursion yields: 
\begin{align*}
&\EE \left[\varphi_{1/\hat\rho}(x_{T+1})\right] \leq \varphi_{1/\hat\rho}(x_{0}) + \hat\rho L^2\sum_{t = 0}^T \alpha_t^2 - \hat\rho(\hat \rho - \rho)\EE\sum_{t = 0}^T\alpha_t\|x_t - \hat x_t\|^2.
\end{align*} 
Next using the inequality $ \varphi_{1/\hat\rho}(x_{T+1})\geq \min \varphi$ and rearranging, we obtain the bound: 
\begin{equation}\label{eqn:main_ineq2}
\begin{aligned}
 \frac{1}{\sum_{t=0}^T \alpha_t}\sum_{t=0}^T \alpha_t \EE\left[\|x_{t} - \hat x_{t}\|^2\right]
&\leq \frac{(\varphi_{1/\hat \rho}(x_{0}) - \min \varphi) + \hat\rho L^2\sum_{t=0}^T \alpha_t^2}{\hat\rho(\hat \rho - \rho)\sum_{t=0}^T \alpha_t}.
\end{aligned}
\end{equation}
To complete the proof, observe that the left-hand-side is exactly $\EE\left[\|x_{t^*} - \hat x_{t^*}\|^2\right]$, while from from~\eqref{eqn:grad_form} we have the equality $\|x_{t^*} - \hat x_{t^*}\|^2 = (1/\hat\rho^2)\|\nabla \varphi_{1/\hat \rho}(x_{t^*})\|^2$.
\end{proof}

In particular, using the constant stepsize $\alpha$ on the order of $\frac{1}{\sqrt{T+1}}$ yields the following complexity guarantee.
\begin{cor}[Complexity guarantee]
		Fix a constant $\gamma\in (0, \tfrac{1}{2\rho}]$ and an index $T>0$, and set the constant steplength $\alpha=\frac{\gamma}{\sqrt{T+1}}$.
		Then the point $x_{t^*}$ returned by Algorithm~\ref{alg:subgradient} satisfies:
		\begin{align*}
		\EE\left[\|\nabla \varphi_{1/(2\rho) }(x_{t^*})\|^2\right]
		&\leq 2\cdot\frac{\left(\varphi_{1/(2 \rho)}(x_0) - \min \varphi\right)+ \rho L^2\gamma^2}{\gamma\sqrt{T+1}}.
		\end{align*} 
\end{cor}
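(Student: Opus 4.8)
The plan is to specialize Theorem~\ref{thm:stochastic_sub2} to the choice $\hat\rho = 2\rho$ together with the constant stepsize $\alpha_t\equiv\alpha = \gamma/\sqrt{T+1}$, in exactly the way Corollary~\ref{cor:complexity} was deduced from Theorem~\ref{thm:stochastic_sub}. The first thing to do is to verify that the hypotheses of Theorem~\ref{thm:stochastic_sub2} are in force. The value $\hat\rho = 2\rho$ clearly lies in $(\rho,2\rho]$, and since $T > 0$ forces $\sqrt{T+1} > 1$, the stepsize obeys $\alpha = \gamma/\sqrt{T+1}\le\gamma\le\tfrac{1}{2\rho} = \hat\rho^{-1}$, so $\alpha_t\in(0,\hat\rho^{-1}]$ holds for every index $t$. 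This is the one place the standing assumption $\gamma\le\tfrac{1}{2\rho}$ is used.

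Once admissibility is established, the remainder is pure substitution into the bound of Theorem~\ref{thm:stochastic_sub2}. The two sums appearing there evaluate to $\sum_{t=0}^T\alpha_t = (T+1)\cdot\tfrac{\gamma}{\sqrt{T+1}} = \gamma\sqrt{T+1}$ and $\sum_{t=0}^T\alpha_t^2 = (T+1)\cdot\tfrac{\gamma^2}{T+1} = \gamma^2$, the leading prefactor becomes $\tfrac{\hat\rho}{\hat\rho-\rho} = \tfrac{2\rho}{\rho} = 2$, and $\varphi_{1/\hat\rho} = \varphi_{1/(2\rho)}$ since $\hat\rho = 2\rho$. Feeding these identities into the inequality of Theorem~\ref{thm:stochastic_sub2} collapses its right-hand side to $2\cdot\frac{(\varphi_{1/(2\rho)}(x_0) - \min\varphi) + \rho L^2\gamma^2}{\gamma\sqrt{T+1}}$, which is exactly the asserted estimate; all that is needed is to keep track of the numerical constants in the numerator and denominator during the substitution.

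The main obstacle here is essentially nonexistent: the corollary is a one-line consequence of Theorem~\ref{thm:stochastic_sub2}, and its proof should read almost verbatim like that of Corollary~\ref{cor:complexity}. The only items demanding any attention are (i) the stepsize feasibility check above, which is the sole use of the constraint on $\gamma$, and (ii) the bookkeeping of constants when specializing $\hat\rho = 2\rho$. Neither is a substantive difficulty, so I would simply write ``This follows immediately from Theorem~\ref{thm:stochastic_sub2} by setting $\hat\rho=2\rho$ and $\alpha_t=\gamma/\sqrt{T+1}$'' and, if desired, append the short arithmetic above.
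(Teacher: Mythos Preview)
Your approach is exactly the paper's: the proof there is the single line ``This follows immediately from Theorem~\ref{thm:stochastic_sub2} by setting $\hat\rho=2\rho$,'' and your write-up simply expands the arithmetic behind that substitution, including the stepsize feasibility check. One minor bookkeeping slip: substituting $\hat\rho=2\rho$ into the term $\hat\rho L^2\sum_{t=0}^T\alpha_t^2$ of Theorem~\ref{thm:stochastic_sub2} actually gives $2\rho L^2\gamma^2$, not $\rho L^2\gamma^2$, so the constant in the numerator should be $2\rho L^2\gamma^2$ (the stated corollary appears to carry over the constant from Theorem~\ref{thm:stochastic_sub}, which has an extra factor $\tfrac12$).
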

\begin{proof}
This follows immediately from Theorem~\ref{thm:stochastic_sub2} by setting $\hat \rho=2\rho$.
\end{proof}

\subsection{Proximal stochastic gradient for smooth minimization}\label{sec:prox_gradient}
Let us now look at the consequences of our results in the setting when $g$ is $C^1$-smooth with $\rho$-Lipschitz gradient. Note, that then $g$ is automatically $\rho$-weakly convex. In this smooth setting, it is common to replace assumption (A3) with the finite variance condition:
\begin{itemize}
	\item[$\overline{(A3)}$]\label{it4} There is a real $\sigma \geq 0$ such that the inequality, $\EE_\xi\left[ \|G(x, \xi) - \nabla g(x)\|^2\right] \leq \sigma^2$, holds for all $x \in \dom r$. 
\end{itemize}

Henceforth, let us therefore assume that $g$ is $C^1$-smooth with $\rho$-Lipschitz gradient and Assumptions (A1), (A2), and $\overline{\rm (A3)}$ hold.

All of the results in Section~\ref{sec:proxmeth} can be easily modified to apply to this setting. In particular, Lemma~\ref{lem:prox_ident} holds verbatim, while Lemma~\ref{lem:descent} extends as follows.

\begin{lem}\label{lem:descent_smooth}
Fix a real $\hat \rho> \rho$ and a
 sequence $\alpha_t\in (0, \hat{\rho}^{-1}]$. Then the inequality holds:
	\begin{align*}
	\EE_t\|x_{t+1} - \hat x_t\|^2 
	&\leq  \|x_t - \hat x_t \|^2  + \alpha_t^2 \sigma^2- \alpha_t(\hat \rho - \rho)\|x_t - \hat x_t\|^2.
	\end{align*}
\end{lem}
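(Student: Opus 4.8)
The plan is to mimic the proof of Lemma~\ref{lem:descent} almost line-for-line, the only differences being that (i) we now use $\hat\zeta_t=\nabla g(\hat x_t)$ and $\zeta_t=\nabla g(x_t)$ (honest gradients, since $g$ is $C^1$), (ii) the variance bound $\overline{(\mathrm{A3})}$ replaces the second-moment bound $(\mathrm{A3})$, and (iii) the cross term is now controlled not by the hypo-monotonicity inequality $\eqref{eqn:stronger_ineq_hypo}$ but by the co-coercivity of $\nabla g$, which costs a factor of $2$ less and is precisely what allows the constants $2\alpha_t^2 L^2$ and $2\alpha_t(\hat\rho-\rho)$ in Lemma~\ref{lem:descent} to improve to $\alpha_t^2\sigma^2$ and $\alpha_t(\hat\rho-\rho)$ here.

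First I would invoke Lemma~\ref{lem:prox_ident} (which the paper states holds verbatim) to write $\hat x_t=\prox_{\alpha_t r}(\alpha_t\hat\rho x_t-\alpha_t\hat\zeta_t+(1-\alpha_t\hat\rho)\hat x_t)$ with $\hat\zeta_t=\nabla g(\hat x_t)$, and then use $1$-Lipschitzness of $\prox_{\alpha_t r}$ exactly as in $\eqref{eqn:non_expproof_2}$ to get
\[
\EE_t\|x_{t+1}-\hat x_t\|^2\leq \EE_t\|(1-\alpha_t\hat\rho)(x_t-\hat x_t)-\alpha_t(G(x_t,\xi_t)-\hat\zeta_t)\|^2.
\]
Next I would split $G(x_t,\xi_t)-\hat\zeta_t=(\nabla g(x_t)-\nabla g(\hat x_t))+(G(x_t,\xi_t)-\nabla g(x_t))$, take the conditional expectation, and note that the noise term $G(x_t,\xi_t)-\nabla g(x_t)$ is conditionally mean-zero with second moment at most $\sigma^2$ by $\overline{(\mathrm{A3})}$. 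This kills the cross term between the noise and the deterministic part and leaves
\[
\EE_t\|x_{t+1}-\hat x_t\|^2\leq \|(1-\alpha_t\hat\rho)(x_t-\hat x_t)-\alpha_t(\nabla g(x_t)-\nabla g(\hat x_t))\|^2+\alpha_t^2\sigma^2.
\]

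The remaining step is to expand the squared norm and bound the cross term $-2(1-\alpha_t\hat\rho)\alpha_t\langle x_t-\hat x_t,\nabla g(x_t)-\nabla g(\hat x_t)\rangle$. Here is where co-coercivity enters: since $g$ is convex with $\rho$-Lipschitz gradient — wait, $g$ is only $\rho$-\emph{weakly} convex, not convex, so I would instead apply co-coercivity to the convex function $g(\cdot)+\tfrac{\rho}{2}\|\cdot\|^2$, whose gradient is $2\rho$-Lipschitz; this yields the sharper two-sided estimate $\langle x_t-\hat x_t,\nabla g(x_t)-\nabla g(\hat x_t)\rangle\geq -\rho\|x_t-\hat x_t\|^2+\tfrac{1}{2\rho}\|\nabla g(x_t)-\nabla g(\hat x_t)\|^2$. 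Since $1-\alpha_t\hat\rho\geq 0$ by the stepsize restriction $\alpha_t\leq\hat\rho^{-1}$, multiplying through by $-2(1-\alpha_t\hat\rho)\alpha_t\leq 0$ and combining with the $+\alpha_t^2\|\nabla g(x_t)-\nabla g(\hat x_t)\|^2$ term from the expansion, the gradient-difference terms can be shown to contribute nonpositively for $\alpha_t\leq\hat\rho^{-1}$; discarding them and simplifying $(1-\alpha_t\hat\rho)^2+2(1-\alpha_t\hat\rho)\alpha_t\rho$ as in the last display of Lemma~\ref{lem:descent} gives exactly $\|x_t-\hat x_t\|^2-(2\alpha_t(\hat\rho-\rho)-\alpha_t^2\hat\rho(2\rho-\hat\rho))\|x_t-\hat x_t\|^2$. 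The main subtlety — and the only place any real care is needed — is verifying that the gradient-difference terms really do drop out with the correct sign for all $\alpha_t\in(0,\hat\rho^{-1}]$ and that, unlike in Lemma~\ref{lem:descent} where $\hat\rho\leq 2\rho$ was assumed, no upper bound on $\hat\rho$ is needed here because the co-coercivity argument produces a genuinely different (and cleaner) cancellation; I expect this bookkeeping, together with double-checking that the leftover coefficient is at least $\alpha_t(\hat\rho-\rho)$ whenever $\alpha_t\le\hat\rho^{-1}$ (which holds since then $\alpha_t\hat\rho(2\rho-\hat\rho)\le 2\rho-\hat\rho\le\hat\rho-\rho$ is \emph{not} automatic — rather one should simply keep the exact coefficient and note it dominates $\alpha_t(\hat\rho-\rho)$ exactly as in the weakly convex case), to be the one spot where I would slow down and write things out in full.
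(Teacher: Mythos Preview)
Your argument is correct and matches the paper up through the point where you isolate the noise and arrive at
\[
\EE_t\|x_{t+1}-\hat x_t\|^2\leq \bigl\|(1-\alpha_t\hat\rho)(x_t-\hat x_t)-\alpha_t(\nabla g(x_t)-\nabla g(\hat x_t))\bigr\|^2+\alpha_t^2\sigma^2.
\]
The gap is in what comes next. The ``co-coercivity'' inequality you invoke,
\[
\langle x_t-\hat x_t,\nabla g(x_t)-\nabla g(\hat x_t)\rangle\;\geq\;-\rho\|x_t-\hat x_t\|^2+\tfrac{1}{2\rho}\|\nabla g(x_t)-\nabla g(\hat x_t)\|^2,
\]
is \emph{false} for weakly convex $g$. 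Take $g(x)=-\tfrac{\rho}{2}\|x\|^2$: then $\nabla g(x)-\nabla g(y)=-\rho(x-y)$, the left side equals $-\rho\|x-y\|^2$, and the right side equals $-\rho\|x-y\|^2+\tfrac{\rho}{2}\|x-y\|^2$, a contradiction. Applying Baillon--Haddad to the convex function $g+\tfrac{\rho}{2}\|\cdot\|^2$ (whose gradient is $2\rho$-Lipschitz) does not yield your displayed inequality; after unwinding, it only recovers $\|\nabla g(x)-\nabla g(y)\|\leq\rho\|x-y\|$. So the mechanism you propose for discarding the gradient-difference terms does not work, and the hedged ``bookkeeping'' paragraph at the end cannot be completed as written.

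The paper's route from this point is actually simpler than what you attempt: it keeps hypo-monotonicity \eqref{eqn:stronger_ineq_hypo} for the cross term (exactly as in Lemma~\ref{lem:descent}) and uses the Lipschitz bound $\|\nabla g(x_t)-\nabla g(\hat x_t)\|^2\leq\rho^2\|x_t-\hat x_t\|^2$ on the quadratic term. These two estimates combine into the perfect square
\[
\bigl[(1-\alpha_t\hat\rho)+\alpha_t\rho\bigr]^2\|x_t-\hat x_t\|^2=\bigl[1-\alpha_t(\hat\rho-\rho)\bigr]^2\|x_t-\hat x_t\|^2,
\]
giving the coefficient $\alpha_t(\hat\rho-\rho)\bigl(2-\alpha_t(\hat\rho-\rho)\bigr)\geq \alpha_t(\hat\rho-\rho)$ since $\alpha_t\leq\hat\rho^{-1}$. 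This is where the improved constant (and the removal of the $\hat\rho\leq 2\rho$ restriction) actually comes from---the Lipschitz control of the $\|\nabla g(x_t)-\nabla g(\hat x_t)\|^2$ term, not any co-coercivity on the inner product.
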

\begin{proof}
By the same argument as in Lemma~\ref{lem:descent_smooth}, we arrive at the inequality \eqref{eqn:est_need_later} with ${\hat \zeta}_t=\nabla g({\hat x}_t)$. Adding and subtracting $\nabla g(x_t)$, we successively deduce
	\begin{align}
	&\EE_t\|x_{t+1} - \hat x_t\|^2\notag\\
	&= \EE_t\|(1-\alpha_t\hat  \rho)(x_t - \hat x_t) - \alpha_t( G(x_t, \xi_t) -  \nabla g({\hat x}_t))\|^2\notag\\
	&= \EE_t\|(1-\alpha_t\hat  \rho)(x_t - \hat x_t) - \alpha_t( \nabla g(x_t)- \nabla g(\hat x_t)) - \alpha_t( G(x_t, \xi_t) - \nabla g(x_t) )\|^2\notag\\	
	&= \|(1-\alpha_t\hat  \rho)(x_t - \hat x_t) - \alpha_t( \nabla g(x_t)- \nabla g(\hat x_t)) \|^2 + \alpha_t^2\EE_t\| G(x_t, \xi_t) - \nabla g(x_t) \|^2\label{eqn:cross_termcancel3}\\	
	&\leq (1-\alpha_t\hat  \rho)^2\|x_t - \hat x_t \|^2 - 2(1-\alpha_t\hat  \rho)\alpha_t\dotp{x_t - \hat x_t, \nabla g(x_t) - \nabla g(\hat x_t)} \notag\\
	&\hspace{20pt} + \alpha_t^2\| \nabla g(x_t)- \nabla g(\hat x_t) \|^2 + \alpha_t^2\sigma^2\label{eqn:expandsquare} \\
	&= (1-\alpha_t\hat  \rho)^2\|x_t - \hat x_t \|^2 + 2(1-\alpha_t\hat  \rho)\alpha_t\rho\|x_t - \hat x_t\|^2 + \rho^2\alpha_t^2 \|x_t - \hat x_t\|^2+\alpha_t^2 \sigma^2 \label{eqn:lip_hyp_stuff}\\
	&= \|x_t - \hat x_t \|^2  + \alpha_t^2 \sigma^2 - (2\alpha_t(\hat  \rho - \rho)  + \alpha_t^2\hat \rho( 2 \rho-\hat \rho) - \rho^2 \alpha_t^2)\|x_t - \hat x_t\|^2,\notag\\
	&= \|x_t - \hat x_t \|^2  + \alpha_t^2 \sigma^2 - \alpha_t(\hat \rho - \rho)(2- \alpha_t(\hat \rho - \rho))\|x_t - \hat x_t\|^2,\notag
	\end{align}
	where \eqref{eqn:cross_termcancel3} follows from assumption (A2), namely  $\EE_t G(x_t, \xi_t)=\nabla g(x_t)$, inequality \eqref{eqn:expandsquare} follows by expanding the square and using assumption $\overline{(A3)}$, and inequality \eqref{eqn:lip_hyp_stuff} follows from \eqref{eqn:stronger_ineq_hypo} and  Lipschitz continuity of $\nabla g$. The assumption $\hat \rho \geq \rho$ guarantees $2- \alpha_t(\hat \rho - \rho)\geq 1$. The result follows. 
\end{proof}

We can now state the convergence guarantees of the proximal stochastic gradient method. The proof is completely analogous to that of Theorem~\ref{thm:stochastic_sub2}, with Lemma~\ref{lem:descent_smooth} playing the role of Lemma~\ref{lem:descent}.

\begin{cor}[Stochastic proximal gradient method for smooth minimization]\label{cor:conv_guarant} \hfill \\
Fix a real $\hat \rho> \rho$ and a
stepsize sequence $\alpha_t\in (0, \hat{\rho}^{-1}]$.
Then the point $x_{t^*}$ returned by Algorithm~\ref{alg:subgradient} satisfies:
\begin{equation*}
 \EE \left[\|\nabla \varphi_{1/\widehat \rho}(x_{t^*})\|^2\right]
\leq \frac{2\hat{\rho}}{\hat \rho-\rho}\cdot\frac{(\varphi_{1/\hat \rho}(x_0) - \min \varphi) + \frac{\hat\rho \sigma^2}{2}\sum_{t=0}^T \alpha_t^2}{\sum_{t=0}^T \alpha_t}.
\end{equation*} 
In particular, setting $\alpha=\frac{\gamma}{\sqrt{T+1}}$ for some real $\gamma\in(0,\frac{1}{2\rho}]$ yields the guarantee
\begin{equation*}
 \EE \left[\|\nabla \varphi_{1/(2\rho)}(x_{t^*})\|^2\right]
\leq 4\cdot\frac{(\varphi_{1/\hat \rho}(x_0) - \min \varphi) + \rho \sigma^2\gamma^2}{\gamma\sqrt{T+1}}.
\end{equation*} 
\end{cor}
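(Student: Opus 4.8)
The plan is to mirror the proof of Theorem~\ref{thm:stochastic_sub2} essentially line for line, substituting Lemma~\ref{lem:descent_smooth} for Lemma~\ref{lem:descent} at the one place it is used. The only structural differences are the absence of the factor $2$ in front of $\alpha_t^2$ (here it is $\sigma^2$ rather than $2L^2$) and the presence of the extra factor $(2-\alpha_t(\hat\rho-\rho))$ in the descent term; the latter is handled by the already-recorded observation that $\hat\rho\geq\rho$ forces $2-\alpha_t(\hat\rho-\rho)\geq 1$, so we may simply discard it and retain the clean bound stated in Lemma~\ref{lem:descent_smooth}.

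Concretely, first I would start from the Moreau-envelope inequality
\begin{align*}
\EE_t\left[\varphi_{1/\hat\rho}(x_{t+1})\right]
&\leq \EE_t\left[\varphi(\hat x_t)+\tfrac{\hat\rho}{2}\|x_{t+1}-\hat x_t\|^2\right],
\end{align*}
which is immediate from the definition of $\varphi_{1/\hat\rho}$ together with $\varphi_{1/\hat\rho}(x_t)=\varphi(\hat x_t)+\tfrac{\hat\rho}{2}\|x_t-\hat x_t\|^2$. Plugging in Lemma~\ref{lem:descent_smooth} gives
\begin{align*}
\EE_t\left[\varphi_{1/\hat\rho}(x_{t+1})\right]
&\leq \varphi_{1/\hat\rho}(x_t)+\tfrac{\hat\rho}{2}\left[\alpha_t^2\sigma^2-\alpha_t(\hat\rho-\rho)\|x_t-\hat x_t\|^2\right].
\end{align*}
Then I would apply the tower property over $t=0,\dots,T$, lower-bound $\varphi_{1/\hat\rho}(x_{T+1})$ by $\min\varphi$, and rearrange exactly as in \eqref{eqn:main_ineq2} to get
\begin{align*}
\frac{1}{\sum_{t=0}^T\alpha_t}\sum_{t=0}^T\alpha_t\,\EE\left[\|x_t-\hat x_t\|^2\right]
&\leq \frac{2\left(\varphi_{1/\hat\rho}(x_0)-\min\varphi\right)+\hat\rho\sigma^2\sum_{t=0}^T\alpha_t^2}{\hat\rho(\hat\rho-\rho)\sum_{t=0}^T\alpha_t}.
\end{align*}
Recognizing the left side as $\EE\left[\|x_{t^*}-\hat x_{t^*}\|^2\right]$ and using $\|x_{t^*}-\hat x_{t^*}\|^2=\hat\rho^{-2}\|\nabla\varphi_{1/\hat\rho}(x_{t^*})\|^2$ from \eqref{eqn:grad_form} yields the first displayed bound after multiplying through by $\hat\rho^2$. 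The ``in particular'' claim follows by setting $\hat\rho=2\rho$, $\alpha_t=\gamma/\sqrt{T+1}$, so that $\sum\alpha_t^2=\gamma^2$ and $\sum\alpha_t=\gamma\sqrt{T+1}$, and simplifying the constants ($\tfrac{2\hat\rho}{\hat\rho-\rho}=4$, $\tfrac{\hat\rho}{2}=\rho$).

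I do not anticipate a genuine obstacle here, since the heavy lifting was already done in Lemma~\ref{lem:descent_smooth}; the only point deserving a moment's care is bookkeeping the extra factor of $2$ that now appears multiplying the initial gap (coming from the $\tfrac{\hat\rho}{2}$ versus the $\hat\rho$ in the $L$-Lipschitz case, equivalently from dividing by $\hat\rho$ instead of $\hat\rho/2$ when isolating $\|x_t-\hat x_t\|^2$), which is exactly why the final constant is $\tfrac{2\hat\rho}{\hat\rho-\rho}$ rather than $\tfrac{\hat\rho}{\hat\rho-\rho}$. One should also double-check that $\hat\rho>\rho$ (strict) is all that is needed — unlike Lemma~\ref{lem:descent}, no upper bound $\hat\rho\leq 2\rho$ is required, because the sign-control of the quadratic term came from Lipschitzness of $\nabla g$ rather than from weak convexity alone.
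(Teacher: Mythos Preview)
Your proposal is correct and matches the paper's own approach exactly: the paper simply states that the proof is completely analogous to that of Theorem~\ref{thm:stochastic_sub2}, with Lemma~\ref{lem:descent_smooth} replacing Lemma~\ref{lem:descent}, and you have spelled out precisely those steps with the right bookkeeping (including the source of the extra factor of $2$ and the fact that no upper bound on $\hat\rho$ is needed here).
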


It is worth noting that in this smooth setting, the norm $\|\nabla \varphi_{\lambda}(x)\|$ is closely related to the magnitude of the {\em prox-gradient mapping} 
$$\mathcal{G}_{\lambda}(x)=\lambda^{-1}\left(x-\prox_{\lambda r}(x-\lambda\nabla g(x))\right).$$
This stationarity measure typically appears when analyzing proximal gradient methods (e.g. \cite{nest_conv_comp,Ghadimi2016mini}). It is straightforward to see that this measure is proportional to the norm of the gradient of the Moreau envelope, the quantity we have been using  \cite[Theorem 3.5]{prox_error}:
$$(1-\rho \lambda)\|\mathcal{G}_{\lambda}(x)\|\leq \|\nabla \varphi_{\lambda}(x)\|\leq (1+\rho \lambda)\|\mathcal{G}_{\lambda}(x)\|\qquad \textrm{ for all }x\in\R^d.$$
Hence, the convergence guarantees of Corollary~\ref{cor:conv_guarant} can be immediately translated in terms of $\|\mathcal{G}_{1/(2\rho)}(x_t^*)\|$, allowing for a direct comparison with previous results.

\let\oldbibliography\thebibliography
\renewcommand{\thebibliography}[1]{%
  \oldbibliography{#1}%
  \setlength{\itemsep}{-1pt}%
}

			\bibliographystyle{plain}
	\bibliography{bibliography}

\newpage	

\end{document}